\documentclass[a4paper,oneside,10pt]{article}%
\usepackage{amsmath}
\usepackage{amsfonts}
\usepackage{amssymb}
\usepackage{graphicx}
\usepackage{color}
\usepackage[square,numbers,sort&compress]{natbib}%
\setcounter{MaxMatrixCols}{30}
\providecommand{\U}[1]{\protect\rule{.1in}{.1in}}

\pagenumbering{arabic}
\setlength{\textwidth}{165mm}
\setlength{\textheight}{220mm}
\headsep=15pt \topmargin=-5mm \oddsidemargin=-0.36cm
\evensidemargin=-0.36cm \raggedbottom
\newtheorem{theorem}{Theorem}[section]

\newtheorem{definition}[theorem]{Definition}
\newtheorem{assumption}[theorem]{Assumption}
\newtheorem{example}[theorem]{Example}

\newtheorem{lemma}[theorem]{Lemma}

\newtheorem{proposition}[theorem]{Proposition}
\newtheorem{remark}[theorem]{Remark}

\newenvironment{proof}[1][Proof]{\noindent\textbf{#1.} }{\ \rule{0.5em}{0.5em}}
\numberwithin{equation}{section}

\begin{document}

\title{The Neyman-Pearson lemma for convex expectations }
\author{Chuanfeng Sun\thanks{School of Mathematical Sciences, University of Jinan,
Jinan, Shandong 250022, P.R. China. e-mail: sms\_suncf@ujn.edu.cn. This
research is partially supported by the National Natural Science Foundation of
China (No. 11701214), the Natural Science Foundation of Shandong Province (No.
ZR2017BA032).} \quad Shaolin Ji\thanks{Zhongtai Institute of Finance, Shandong
University, Jinan, Shandong 250100, PR China. e-mail: jsl@sdu.edu.cn. This
research is supported by National Natural Science Foundation of China (No.
11571203), the Programme of Introducing Talents of Discipline to Universities
of China (No. B12023).}}
\date{}
\maketitle

\textbf{Abstract}. We study the Neyman-Pearson problem for convex expectations
on $L^{\infty}(\mu)$. The existence of the optimal test is given. Without
assuming that the level sets of penalty functions are weakly compact, we prove
that the optimal tests for convex expectations on $L^{\infty}(\mu)$ are just
the classical Neyman-Pearson tests between a fixed representative pair of
simple hypotheses. Then we show that the Neyman-Pearson problem for convex
expectations on $L^{1}(\mu)$ can be solved similarly.

{\textbf{Key words}. }Convex risk measure, Hypothesis testing, Neyman-Pearson
lemma, Minimax theorem

{\textbf{Mathematics Subject Classification (2010)}. } 49J35, 62G10, 91B30

\section{Introduction}

The classical Neyman-Pearson lemma gives the most powerful test for
discriminating between two probability measures and has important applications
in various fields (see \cite{fe}, \cite{le-ro}).

It is well known that many phenomena need to be explored by nonlinear
probabilities or expectations. In 1954, Choquet \cite{r3} extended the
probability measure to the capacity and gave a nonlinear integral named after
him. The coherent risk measure was proposed by Artzner et al. \cite{r5} and
the $g$-expectation was initiated by Peng \cite{r4} in 1999. F\"{o}llmer and
Schied \cite{r6} generalized the coherent risk measure to the convex risk
measure in 2002. Divergence risk measures were considered by Ben-Tal and
Teboulle \cite{BT} under the name of optimized certainty equivalents.

Along with the development of the above concepts, several nonlinear versions
of Neyman-Pearson lemma have also been established. In 1973, Huber and
Strassen \cite{{r7}} studied the Neyman-Pearson lemma for capacities.
Cvitani\'{c} and Karatzas \cite{r1} extended the classical Neyman-Pearson
theory for testing composite hypotheses versus composite alternatives which
can also be understood as discriminating between two sublinear expectations in
2001. Later Schied \cite{r2} gave a Neyman-Pearson lemma for law-invariant
coherent risk measures and robust utility functionals. Ji and Zhou \cite{r8}
studied hypothesis tests for $g$-probabilities in 2008. Rudloff and Karatzas
\cite{r9} studied composite hypothesis by using convex duality in 2010. Apart
from their own theoretical value, the nonlinear versions of Neyman-Pearson
lemma have been found to have many applications especially in finance. For
instance, F\"{o}llmer and Leukert \cite{fo-le-1999} and \cite{fo-le-2000}
studied the quantile hedging and efficient hedging which minimize the
shortfall risk in an incomplete financial market. Rudloff \cite{r10} found a
self-financing strategy that minimize the convex risk of the shortfall using
convex duality method.

In most literatures, the convex duality method is employed to study the
nonlinear Neyman-Pearson lemma or "Neyman-Pearson type" optimization problem.
Without assuming the set of densities which generate the sublinear expectation
is weakly compact, Cvitani\'{c} and Karatzas \cite{r1} studied the
Neyman-Pearson lemma for sublinear expectations on $L^{\infty}(\mu)$. However,
due to the additional penalty function terms, the convex duality method in
\cite{r1} is difficult to apply to the case for convex expectations on
$L^{\infty}(\mu)$. In order to measure the shortfall risk, F\"{o}llmer and
Leukert \cite{fo-le-2000} adopted a specific convex risk measure and Rudloff
\cite{r10} used the convex risk measure on $L^{1}(\mu)$. In both cases, the
sets of densities which generate the convex risk measure are weakly compact.

As in \cite{r1}, it is natural to study the Neyman-Pearson lemma on
$L^{\infty}(\mu)$. So in this paper, we investigate the Neyman-Pearson lemma
for convex expectations (convex risk measures) on $L^{\infty}(\mu)$ and do not
assume that the set of densities which generate the convex expectation is
weakly compact. In more details, for two given convex expectations $\rho_{1}$,
$\rho_{2}$ and a significance level $\alpha$, we want to find an optimal test
$X^{\ast}$ which minimize the convex expectation of Type II error, among all
tests that keep the convex expectation of Type I error below the given
acceptable significance level $\alpha\in(0,1)$. In other words, we study the
following problem:%
\begin{equation}
\text{minimize}\,\rho_{2}(1-X), \label{prelimilary}%
\end{equation}
over the set $\mathcal{X}_{\alpha}=\{X\in L^{\infty}(\mu):0\leq X\leq1,$
$\rho_{1}(X)\leq\alpha\}$.

Instead of being weakly compact, we only assume that the level sets of penalty
functions are closed under the $\mu$-a.e. convergence. Under this assumption,
we can't directly apply the approach in \cite{r1}. The key to solving this
problem is that we find the feasible set $\mathcal{X}_{\alpha}$ is compact in
the weak$^{\ast}$ topology $\sigma(L^{\infty},L^{1})$. Based on this, we can
apply the minimax theorem and find the representative probability measure
$Q^{\ast}$ for $\rho_{2}$. By solving the dual problem, the representative
probability measure $P^{\ast}$ for $\rho_{1}$ is also found. Thus, the optimal
tests for convex expectations are just the classical Neyman-Pearson tests
between a fixed representative pair $(P^{\ast},Q^{\ast})$.

As a by-product, we found that similar ideas can be used to solve the
Neyman-Pearson problem for convex expectations on $L^{1}(\mu)$. So we put this
result in the appendix and gave a brief proof.

This paper is organized as follows: In Section 2, we give some preliminaries
and formulate the simple hypothesis testing problem for convex expectations on
$L^{\infty}(\mu)$. The existence of the optimal tests is derived in section 3.
In section 4, we obtain the form of the optimal tests. An application is given
to illustrate our main result in section 5. Finally, in the appendix we show
that if convex expectations are continuous from above, then Assumption
\ref{assumption} holds naturally and give the Neyman-Pearson lemma for convex
expectations on $L^{1}(\mu)$.

\section{Preliminaries and Problem Formulation}

Let $(\Omega,\mathcal{F},\mu)$ be a probability space and $\mathcal{M}$ be the
set of probability measures on $(\Omega,\mathcal{F})$ that are absolutely
continuous with respect to $\mu$. $P$ and $Q$ are probability measures and
their Radon-Nikodym derivatives $\frac{dP}{d\mu}$ and $\frac{dQ}{d\mu}$ are
denoted as $G_{P}$ and $H_{Q}$ respectively.

\begin{definition}
A mapping $\rho$: $L^{\infty}(\mu)\rightarrow\mathbb{R}$ is called a convex
expectation if for any $X, Y\in L^{\infty}(\mu)$, we have

(i) Monotonicity: If $X\geq Y$, then $\rho(X)\geq\rho(Y)$;

(ii) Invariance: If $c$ is a constant, then $\rho(X+c)=\rho(X)+c$;

(iii) Convexity: If $\lambda\in[0, 1]$, then $\rho\big(\lambda X+(1-\lambda
)Y\big)\leq\lambda\rho(X)+(1-\lambda)\rho(Y)$.
\end{definition}

If we take $\rho^{\prime}(X):=\rho(-X)$, then $\rho^{\prime}$ is a convex risk measure.

\begin{definition}
\label{continuous from below} We call a convex expectation $\rho$ is
continuous from below iff for any sequence $\{X_{n}\}_{n\geq1}\subset
L^{\infty}(\mu)$ increases to some $X\in L^{\infty}(\mu)$, then $\rho
(X_{n})\rightarrow\rho(X)$.
\end{definition}

The following theorem comes from Theorem 6 and Proposition 7 in \cite{r6}.

\begin{theorem}
\label{representation} If a convex expectation $\rho$ is continuous from
below, then

i) For any $X\in L^{\infty}(\mu)$,
\begin{equation}
\rho(X)=\sup\limits_{P\in\mathcal{M}}\big(E_{P}[X]-\rho^{\ast}(P)\big),
\end{equation}
where $\rho^{\ast}$ is the penalty function of $\rho$ and $\rho^{\ast}%
(P)=\sup\limits_{X\in L^{\infty}(\mu)}\big(E_{P}[X]-\rho(X)\big)$.

ii) For any bounded sequence $\{X_{n}\}_{n\geq1}\subset L^{\infty}(\mu)$, if
$X_{n}$ converges to some $X\in L^{\infty}(\mu)$ in probability, then
$\rho(X)\leq\liminf\limits_{n\rightarrow\infty}\rho(X_{n})$.
\end{theorem}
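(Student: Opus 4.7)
The plan is to recognize Theorem~\ref{representation} as a standard robust representation in the spirit of Föllmer-Schied. Part~(i) follows from Fenchel-Moreau duality on $L^{\infty}(\mu)$ equipped with the weak$^{*}$ topology $\sigma(L^{\infty},L^{1})$, once weak$^{*}$ lower semicontinuity of $\rho$ is verified. Part~(ii) is then a short consequence of~(i).

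For part~(i), I would first observe that monotonicity, translation invariance and convexity force $\rho$ to be finite-valued and $1$-Lipschitz in the $L^{\infty}$ norm, hence proper and convex. The key step is to show $\rho$ is lower semicontinuous for $\sigma(L^{\infty},L^{1})$. By the Krein-Smulian theorem it suffices to show that every truncated level set $\{X\in L^{\infty}(\mu):\rho(X)\leq c,\,\|X\|_{\infty}\leq n\}$ is weak$^{*}$ closed; since on a norm ball of $L^{\infty}$ the weak$^{*}$ topology is metrizable and any weak$^{*}$-convergent sequence admits an almost sure convergent subsequence, the problem reduces to establishing the Fatou property along a.s. convergent bounded sequences. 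Given such $X_{n_k}\to X$ a.s.\ with a common bound, setting $Y_{m}:=\inf_{k\geq m}X_{n_{k}}$ produces an increasing sequence in $L^{\infty}(\mu)$ with $Y_{m}\uparrow X$, so continuity from below yields $\rho(Y_{m})\to\rho(X)$ while monotonicity gives $\rho(Y_{m})\leq\inf_{k\geq m}\rho(X_{n_{k}})$; a routine subsequence-of-subsequences argument then upgrades this to $\rho(X)\leq\liminf_{n}\rho(X_{n})$.

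With weak$^{*}$ lower semicontinuity secured, Fenchel-Moreau delivers
\[
\rho(X)=\sup_{Y\in L^{1}(\mu)}\bigl(E[XY]-\rho^{*}(Y)\bigr).
\]
It remains to identify the effective domain of $\rho^{*}$ with $\mathcal{M}$. If $\mu(\{Y<0\})>0$, testing with $X=-nI_{\{Y<0\}}$ and bounding $\rho(-nI_{\{Y<0\}})\leq\rho(0)$ via monotonicity forces $\rho^{*}(Y)=+\infty$; if $E[Y]\neq 1$, testing against constants $c$ and invoking translation invariance gives $\rho^{*}(Y)\geq c(E[Y]-1)-\rho(0)\to+\infty$ for an appropriate sign of $c$. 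Hence the supremum can be restricted to densities $G_{P}$ with $P\in\mathcal{M}$, yielding the announced formula.

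Part~(ii) is then immediate from~(i). For any $P\in\mathcal{M}$, dominated convergence applied to the uniformly bounded sequence $G_{P}X_{n}$ (which converges in probability to $G_{P}X$) gives $E_{P}[X_{n}]\to E_{P}[X]$, so
\[
E_{P}[X]-\rho^{*}(P)=\lim_{n}\bigl(E_{P}[X_{n}]-\rho^{*}(P)\bigr)\leq\liminf_{n}\rho(X_{n}),
\]
and taking the supremum over $P\in\mathcal{M}$ together with~(i) yields $\rho(X)\leq\liminf_{n}\rho(X_{n})$. The main obstacle is the passage from the a priori weaker hypothesis of continuity from below to genuine weak$^{*}$ lower semicontinuity; the $Y_{m}=\inf_{k\geq m}X_{n_{k}}$ truncation combined with the Krein-Smulian reduction to norm-bounded level sets is the decisive ingredient, while the identification of $\mathrm{dom}(\rho^{*})$ with $\mathcal{M}$ is more mechanical.
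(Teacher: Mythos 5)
The paper does not actually prove Theorem~\ref{representation}: its ``proof'' is a citation of Theorem~6 and Proposition~7 in \cite{r6}. Your proposal reconstructs that standard argument, and its overall architecture is the right one: finiteness and $1$-Lipschitz continuity from monotonicity plus translation invariance, reduction of weak$^{*}$ lower semicontinuity to the Fatou property via Krein--Smulian, the Fatou property from continuity from below via $Y_{m}:=\inf_{k\geq m}X_{n_{k}}\uparrow X$, Fenchel--Moreau for the dual pair $(L^{\infty},L^{1})$, and the identification of $\mathrm{dom}(\rho^{*})$ with densities of probability measures. Those pieces, and your derivation of part~(ii) from part~(i) by bounded convergence under each $P\ll\mu$, are all correct; in fact your $Y_{m}$ argument already gives~(ii) directly, without going through~(i).

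There is, however, one step that fails as stated: the claim that ``on a norm ball of $L^{\infty}$ the weak$^{*}$ topology is metrizable and any weak$^{*}$-convergent sequence admits an almost sure convergent subsequence.'' Metrizability of the ball in $\sigma(L^{\infty},L^{1})$ requires $L^{1}(\mu)$ to be separable, which is not assumed; and even when it holds, a weak$^{*}$-convergent sequence need not have an a.s.\ convergent subsequence --- the Rademacher functions on $[0,1]$ converge to $0$ in $\sigma(L^{\infty},L^{1})$ while no subsequence converges a.s. So you cannot test weak$^{*}$ closedness of the truncated level set $C_{n}:=\{\rho\leq c\}\cap\{\|X\|_{\infty}\leq n\}$ directly along a.s.\ convergent subsequences of weak$^{*}$-convergent sequences. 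The standard repair uses the convexity you already have: show instead that $C_{n}$ is closed in the $L^{1}(\mu)$-norm (here $L^{1}$-convergent sequences \emph{do} have a.s.\ convergent subsequences, so your Fatou argument applies); a convex norm-closed subset of $L^{1}$ is $\sigma(L^{1},L^{\infty})$-closed by Hahn--Banach, and on norm-bounded subsets of $L^{\infty}$ the topology $\sigma(L^{\infty},L^{1})$ is finer than $\sigma(L^{1},L^{\infty})$, so $C_{n}$ is weak$^{*}$ closed and Krein--Smulian applies. With that substitution your proof is complete and coincides with the argument in \cite{r6} that the paper invokes.
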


Given two convex expectations $\rho_{1}$ and $\rho_{2}$, for a significance
level $\alpha$ and two random variables $K_{1}$ and $K_{2}$ belonging to
$L^{\infty}(\mu)$ such that $0\leq K_{1}<K_{2}$, we are interested in the
following problem:
\begin{equation}
\text{minimize}\quad\rho_{2}(K_{2}-X), \label{initial-problem}%
\end{equation}
over the set $\mathcal{X}_{\alpha}=\{X:K_{1}\leq X\leq K_{2},\rho_{1}%
(X)\leq\alpha,X\in L^{\infty}(\mu)\}$. Without loss of generality, we assume
$\rho_{1}(K_{1})\leq\alpha\leq\rho_{1}(K_{2})$. Note that if $K_{1}=0$ and
$K_{2}=1$, then the above problem becomes the problem (\ref{prelimilary}).

We call $X^{\ast}$ is the optimal test of (\ref{initial-problem}) if $X^{\ast
}\in\mathcal{X}_{\alpha}$ and
\begin{equation}
\rho_{2}(K_{2}-X^{\ast})=\inf\limits_{X\in\mathcal{X}_{\alpha}}\rho_{2}%
(K_{2}-X).
\end{equation}

By (i) of Theorem \ref{representation},
\[
\rho_{1}(X)=\sup\limits_{P\in\mathcal{M}}E_{P}[X]-\rho_{1}^{\ast}%
(P)\quad\text{and}\quad\rho_{2}(X)=\sup\limits_{Q\in\mathcal{M}}E_{Q}%
[X]-\rho_{2}^{\ast}(Q).
\]
If we denote
\[
\mathcal{P}=\{P:P\in\mathcal{M},\rho_{1}^{\ast}(P)<\infty\}\quad
\text{and}\quad\mathcal{Q}=\{Q:Q\in\mathcal{M},\rho_{2}^{\ast}(Q)<\infty\},
\]
then $\mathcal{P}$ and $\mathcal{Q}$ are nonempty convex sets and
\[
\rho_{1}(X)=\sup\limits_{P\in\mathcal{P}}E_{P}[X]-\rho_{1}^{\ast}%
(P)\quad\text{and}\quad\rho_{2}(X)=\sup\limits_{Q\in\mathcal{Q}}E_{Q}%
[X]-\rho_{2}^{\ast}(Q).
\]
Thus, the problem (\ref{initial-problem}) can also be considered as
discriminating between two convex expectations $\rho_{1}$ and $\rho_{2}$
generated by $\mathcal{P}$ and $\mathcal{Q}$.

\section{The existence of the optimal test \ }

Set $\beta=\inf\limits_{X\in\mathcal{X}_{\alpha}}\rho_{2}(K_{2}-X)$. The
following result shows that the optimal test exists.

\begin{theorem}
\label{existence} If $\rho_{1}$ and $\rho_{2}$ are convex expectations
continuous from below, then the optimal test of (\ref{initial-problem}) exists.
\end{theorem}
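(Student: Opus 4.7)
The plan is to obtain existence by a direct Weierstrass-type argument: show the feasible set $\mathcal{X}_\alpha$ is weak$^*$ compact in $L^\infty(\mu)$ and the objective $X\mapsto\rho_2(k_2-X)$ is weak$^*$ lower semicontinuous on it.

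First I would observe that $\mathcal{X}_\alpha$ is non-empty: the assumption $\rho_1(k_1)\leq\alpha$ places the constant $k_1$ inside it. Next I would show $\mathcal{X}_\alpha$ is weak$^*$ compact. The order interval $\{X\in L^\infty(\mu):k_1\leq X\leq k_2\}$ is norm bounded, and it is weak$^*$ closed because it equals
\[
\bigcap_{Z\in L^1_+(\mu)}\{X\in L^\infty(\mu):k_1 E_\mu[Z]\leq E_\mu[XZ]\leq k_2 E_\mu[Z]\},
\]
an intersection of weak$^*$ closed half-spaces. By Banach--Alaoglu it is therefore weak$^*$ compact. It remains to check that the constraint set $\{X:\rho_1(X)\leq\alpha\}$ is weak$^*$ closed. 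Here I would invoke part (i) of Theorem \ref{representation}: since $\rho_1$ is continuous from below,
\[
\rho_1(X)=\sup_{P\in\mathcal{P}}\bigl(E_P[X]-\rho_1^*(P)\bigr),
\]
and each map $X\mapsto E_P[X]-\rho_1^*(P)=E_\mu[XG_P]-\rho_1^*(P)$ is weak$^*$ continuous because $G_P\in L^1(\mu)$. Hence $\rho_1$ is a supremum of weak$^*$ continuous affine functionals, hence weak$^*$ l.s.c., so its sublevel set $\{\rho_1\leq\alpha\}$ is weak$^*$ closed. Intersecting gives $\mathcal{X}_\alpha$ weak$^*$ compact.

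For the objective, the exact same representation applied to $\rho_2$ yields that $Y\mapsto\rho_2(Y)$ is weak$^*$ l.s.c.\ on $L^\infty(\mu)$. The translation and sign change $X\mapsto k_2-X$ is weak$^*$ continuous, so $X\mapsto\rho_2(k_2-X)$ is weak$^*$ l.s.c. Taking a minimizing sequence $\{X_n\}\subset\mathcal{X}_\alpha$, compactness produces a weak$^*$ accumulation point $X^*\in\mathcal{X}_\alpha$ and lower semicontinuity gives
\[
\rho_2(k_2-X^*)\leq\liminf_{n\to\infty}\rho_2(k_2-X_n)=\inf_{X\in\mathcal{X}_\alpha}\rho_2(k_2-X),
\]
so $X^*$ is optimal.

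The only subtle step is the weak$^*$ lower semicontinuity of $\rho_1$ and $\rho_2$; this is precisely where Theorem \ref{representation} is needed, since for a general $\sigma(L^\infty,L^1)$-convergent net one does not have a Fatou-type statement directly from part (ii) of that theorem (which is stated for convergence in probability of bounded sequences). The dual representation converts the problem into lower semicontinuity of a supremum of weak$^*$ continuous maps, which is automatic. Everything else is routine Banach--Alaoglu together with the obvious continuity of the affine map $X\mapsto k_2-X$.
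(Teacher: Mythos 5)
Your proof is correct, but it takes a genuinely different route from the paper. The paper argues via the Koml\'os theorem: it takes a minimizing sequence in $\mathcal{X}_\alpha$, extracts a subsequence whose Ces\`aro means converge $\mu$-a.e.\ to some $X^*$, and then uses convexity of $\rho_1,\rho_2$ together with part (ii) of Theorem \ref{representation} (the Fatou property along bounded sequences converging in probability) to show $X^*$ is feasible and optimal. You instead run a Weierstrass argument: weak$^*$ compactness of $\mathcal{X}_\alpha$ (Banach--Alaoglu for the order interval, plus weak$^*$ closedness of $\{\rho_1\le\alpha\}$) and weak$^*$ lower semicontinuity of $X\mapsto\rho_2(k_2-X)$, both obtained from part (i) of Theorem \ref{representation} by writing $\rho_i$ as a supremum of the weak$^*$ continuous affine maps $X\mapsto E_\mu[XG_P]-\rho_i^*(P)$. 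Your identification of the order interval as an intersection of weak$^*$ closed half-spaces is correct, and you rightly avoid applying part (ii) to weak$^*$ convergent nets, which is the one place a naive version of your argument would break. What each approach buys: the paper's Koml\'os technique is reused almost verbatim in Proposition \ref{property-optimal-test}, Theorem \ref{au-alpha} and Proposition \ref{auxiliary-result} (where the objects are densities in $L^1$ and no weak$^*$ compactness is available), so it gives a uniform template; your argument is shorter and more standard, needs only the dual representation rather than the Fatou property, and essentially anticipates Lemma \ref{weak*-compact}, which the paper must prove anyway for the minimax step --- though note the paper proves that compactness by a different device (a polar-set theorem from \cite{r11}) rather than your direct Alaoglu-plus-closedness argument. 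One cosmetic point: since the weak$^*$ topology on a bounded set of $L^\infty(\mu)$ need not be metrizable, it is cleaner to cite the standard fact that a lower semicontinuous function on a compact set attains its infimum (via nested sublevel sets) rather than to speak of an accumulation point of a sequence; the conclusion is the same.
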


\begin{proof}
Take a sequence $\{X_{n}\}_{n\geq1}\subset\mathcal{X}_{\alpha}$ such that
\[
\rho_{2}(K_{2}-X_{n})<\beta+\frac{1}{2^{n}}.
\]
By the Koml\'{o}s theorem, there exist a subsequence $\{X_{n_{i}}\}_{i\geq1}$
of $\{X_{n}\}_{n\geq1}$ and a random variable $X^{\ast}$ such that \
\begin{equation}
\lim_{k\rightarrow\infty}\frac{1}{k}\sum_{i=1}^{k}X_{n_{i}}=X^{\ast},\quad
\mu-a.e..
\end{equation}
Since for any $n$, $K_{1}\leq X_{n}\leq K_{2}$, we have $K_{1}\leq X^{\ast
}\leq K_{2}$, $\mu$-a.e.. By (ii) of Theorem \ref{representation},
\[
\rho_{1}(X^{\ast})\leq\liminf_{k\rightarrow\infty}\rho_{1}(\frac{1}{k}%
\sum_{i=1}^{k}X_{n_{i}})\leq\liminf_{k\rightarrow\infty}\frac{1}{k}\sum
_{i=1}^{k}\rho_{1}(X_{n_{i}})\leq\alpha
\]
which leads to $X^{\ast}\in\mathcal{X}_{\alpha}$. On the other hand,
\[
\rho_{2}(K_{2}-X^{\ast})\leq\liminf_{k\rightarrow\infty}\frac{1}{k}\sum
_{i=1}^{k}\rho_{2}(K_{2}-X_{n_{i}})\leq\beta+\lim_{k\rightarrow\infty}\frac
{1}{k}=\beta.
\]
Thus,
\[
\rho_{2}(K_{2}-X^{\ast})=\inf\limits_{X\in\mathcal{X}_{\alpha}}\rho_{2}%
(K_{2}-X).
\]
This completes the proof.
\end{proof}

\section{The form of the optimal test}

Note that
\[
\inf\limits_{X\in\mathcal{X}_{\alpha}}\rho_{2}(K_{2}-X)=\inf\limits_{X\in
\mathcal{X}_{\alpha}}\sup\limits_{Q\in\mathcal{Q}}\big(E_{Q}[K_{2}-X]-\rho
_{2}^{\ast}(Q)\big).
\]
Then $X^{\ast}$ is the optimal test of (\ref{initial-problem}) if and only if
it is the optimal test of the problem:
\begin{equation}
\text{minimize}\quad\sup\limits_{Q\in\mathcal{Q}}\big(E_{Q}[K_{2}-X]-\rho
_{2}^{\ast}(Q)\big), \label{extended-problem}%
\end{equation}
over $\mathcal{X}_{\alpha}$.

Now we focus on solving the problem (\ref{extended-problem}). Denote the level
sets of penalty functions $\rho_{1}^{\ast}$ and $\rho_{2}^{\ast}$ as
\[
\mathcal{G}_{c}=\{G_{P}:P\in\mathcal{P},\text{\ }\rho_{1}^{\ast}(P)\leq
c\}\quad\text{and}\quad\mathcal{H}_{c}=\{H_{Q}:Q\in\mathcal{Q},\text{\ }%
\rho_{2}^{\ast}(Q)\leq c\},
\]
where $c$ is a constant. Since $\rho_{1}^{\ast}$ and $\rho_{2}^{\ast}$ are
convex functions on $\mathcal{M}$, then both $\mathcal{G}_{c}$ and
$\mathcal{H}_{c}$ are convex sets.

Since $K_{1}$ and $K_{2}$ belong to $L^{\infty}(\mu)$, we denote the least
upper bound of them by $M$.

\begin{assumption}
\label{assumption} There exist two constants $u>\max\{0, M-\rho_{1}(0)+1\}$
and $v>\max\{0, M-\rho_{2}(0)+1\}$ such that $\mathcal{G}_{u}$ and
$\mathcal{H}_{v}$ are both closed under the $\mu$-a.e. convergence.
\end{assumption}

Since the penalty function of the sublinear expectation takes only the values
$0$ and $+\infty$, for sublinear case, Assumption \ref{assumption} is equal to
require $\{G_{P}:P\in P\}$ and $\{H_{Q}:Q\in Q\}$ are both closed under the
$\mu$-a.e. convergence, which is similar as the assumption given by
Cvitani\'{c} and Karatzas in \cite{r1}. In Appendix, we show that if $\rho
_{1}$ and $\rho_{2}$ are continuous from above, then Assumption
\ref{assumption} holds naturally.

\subsection{The existence of a representative probability $Q^{\ast}$}

In this subsection, we want to find a representative probability $Q^{\ast}$
such that
\[
\inf\limits_{X\in\mathcal{X}_{\alpha}}\sup\limits_{Q\in\mathcal{Q}}%
\big(E_{Q}[K_{2}-X]-\rho_{2}^{\ast}(Q)\big)=\inf\limits_{X\in\mathcal{X}%
_{\alpha}}E_{Q^{\ast}}[K_{2}-X]-\rho_{2}^{\ast}(Q^{\ast})\text{.}%
\]
If such a $Q^{\ast}$ exists, then for any optimal test $X^{\ast}$ of
(\ref{initial-problem}), we have
\[
\sup\limits_{Q\in\mathcal{Q}}\big(E_{Q}[K_{2}-X^{\ast}]-\rho_{2}^{\ast
}(Q)\big)=\inf\limits_{X\in\mathcal{X}_{\alpha}}E_{Q^{\ast}}[K_{2}-X]-\rho
_{2}^{\ast}(Q^{\ast}),
\]
which leads to $E_{Q^{\ast}}[K_{2}-X^{\ast}]=\inf\limits_{X\in\mathcal{X}%
_{\alpha}}E_{Q^{\ast}}[K_{2}-X]$.

\begin{theorem}
\label{minimax-result} If $\rho_{1}$ and $\rho_{2}$ are convex expectations
continuous from below and Assumption \ref{assumption} holds, then there exists
$Q^{\ast}\in\mathcal{Q}$ such that for any optimal test $X^{\ast}$ of
(\ref{initial-problem}), we have
\begin{equation}
E_{Q^{\ast}}[K_{2}-X^{\ast}]=\inf\limits_{X\in\mathcal{X}_{\alpha}}E_{Q^{\ast
}}[K_{2}-X].
\end{equation}

\end{theorem}

Before proving Theorem \ref{minimax-result}, we first give some lemmas.

\begin{lemma}
\label{Fatou-property-2} For any sequence $\{Q_{n}\}_{n\geq1}\subset
\mathcal{M}$, if $H_{Q_{n}}$ converges to some $H_{Q_{0}}$ under $L^{1}(\mu)$
norm, then
\begin{equation}
\inf\limits_{X\in\mathcal{X}_{\alpha}}E_{Q_{0}}[K_{2}-X]\geq\limsup
_{n\rightarrow\infty}\inf\limits_{X\in\mathcal{X}_{\alpha}}E_{Q_{n}}[K_{2}-X].
\end{equation}

\end{lemma}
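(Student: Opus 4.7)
The plan is to mirror the proof of Lemma \ref{Fatou-property-1} from the previous section, replacing $G_{P_n}$ by $H_{Q_n}$ and exploiting the fact that every admissible test function is bounded and nonnegative. Specifically, set
\[
J_k := \inf_{n \geq k} H_{Q_n},
\]
which is $\mu$-a.e.\ nondecreasing in $k$ and satisfies $J_k \uparrow \liminf_{n \to \infty} H_{Q_n} = H_0$ $\mu$-a.e., because the full sequence $\{H_{Q_n}\}$ already converges to $H_0$. In particular $0 \leq J_k \leq H_0$ $\mu$-a.e. for every $k$.

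The first step is to rewrite $E_\mu[X H_0]$ for a fixed $X \in \mathcal{X}_{\alpha^*}$ as a supremum of the $E_\mu[X J_k]$. This is where I use the standing hypothesis $0 \leq k_1 \leq X \leq k_2$: the products $X J_k$ form a nondecreasing sequence of nonnegative functions whose pointwise limit is $X H_0 \in L^1(\mu)$, so monotone convergence gives $E_\mu[X H_0] = \sup_{k \geq 1} E_\mu[X J_k]$. After interchanging the two suprema,
\[
\sup_{X \in \mathcal{X}_{\alpha^*}} E_\mu[X H_0] = \sup_{k \geq 1} \sup_{X \in \mathcal{X}_{\alpha^*}} E_\mu[X J_k].
\]

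The second step bounds each inner supremum. Since $X \geq 0$ and $J_k \leq H_{Q_n}$ for all $n \geq k$, monotonicity of the integral yields $E_\mu[X J_k] \leq \inf_{n \geq k} E_{Q_n}[X]$. Taking $\sup_X$ and using the elementary $\sup \inf \leq \inf \sup$ inequality gives
\[
\sup_{X \in \mathcal{X}_{\alpha^*}} E_\mu[X J_k] \leq \inf_{n \geq k} \sup_{X \in \mathcal{X}_{\alpha^*}} E_{Q_n}[X].
\]
Passing to the supremum over $k$ on both sides converts the right-hand side to $\liminf_{n \to \infty} \sup_{X \in \mathcal{X}_{\alpha^*}} E_{Q_n}[X]$, which is exactly the desired inequality.

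I do not anticipate any real obstacle. The only spots that warrant a moment of care are the application of monotone convergence (which is legitimate thanks to $X \geq k_1 \geq 0$ and $X J_k \leq k_2 H_0 \in L^1(\mu)$) and the interchange of the two suprema (trivial). The structural idea of approximating $H_0$ from below by the tail infima $J_k$ and then using $\sup \inf \leq \inf \sup$ is precisely the device already validated in Lemma \ref{Fatou-property-1}, so the present lemma should follow with essentially the same computation.
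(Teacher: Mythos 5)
Your proof is correct and follows essentially the same route as the paper: the paper also sets $J_{k}:=\inf_{n\geq k}H_{Q_{n}}$, writes $E_{\mu}[XH_{0}]=\sup_{k}E_{\mu}[XJ_{k}]$ (using $X\geq k_{1}\geq 0$ and monotone convergence), interchanges the suprema, and concludes via $J_{k}\leq H_{Q_{n}}$ and the $\sup\inf\leq\inf\sup$ inequality. The only difference is that you spell out the justification for the monotone-convergence step, which the paper leaves implicit.
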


\begin{proof}
For any $X\in\mathcal{X}_{\alpha}$, we have
\[
E_{Q_{0}}[K_{2}-X]=\lim\limits_{n\to\infty}E_{Q_{n}}[K_{2}-X]\geq
\limsup\limits_{n\to\infty}\inf\limits_{X\in\mathcal{X}_{\alpha}}E_{Q_{n}%
}[K_{2}-X].
\]
Then
\[
\inf\limits_{X\in\mathcal{X}_{\alpha}}E_{Q_{0}}[K_{2}-X]\geq\limsup
\limits_{n\to\infty}\inf\limits_{X\in\mathcal{X}_{\alpha}}E_{Q_{n}}[K_{2}-X].
\]
This completes the proof.
\end{proof}

\begin{lemma}
\label{weak*-compact} If $\rho_{1}$ is a convex expectation continuous from
below, then $\mathcal{X}_{\alpha}$ is compact in the weak$^{*}$ topology
$\sigma(L^{\infty}(\mu), L^{1}(\mu))$.
\end{lemma}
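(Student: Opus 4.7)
The plan is to realize $\mathcal{X}_{\alpha^*}$ as the intersection of two sets, one weak$^*$ compact (coming from Banach--Alaoglu applied to the order interval $[k_1,k_2]$) and one weak$^*$ closed (the sublevel set $\{\rho_1\le\alpha^*\}$), and then invoke the fact that a weak$^*$ closed subset of a weak$^*$ compact set is weak$^*$ compact.

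First I would argue that the order interval $K:=\{X\in L^\infty(\mu):k_1\le X\le k_2\}$ is weak$^*$ compact. It is contained in the closed ball of radius $k_2$ of $L^\infty(\mu)$, which is weak$^*$ compact by the Banach--Alaoglu theorem. Moreover $K$ is weak$^*$ closed, since for every nonnegative $Y\in L^1(\mu)$ the maps $X\mapsto E_\mu[XY]$ are weak$^*$ continuous, and $K$ is an intersection of sets of the form $\{X:E_\mu[XY]\ge k_1 E_\mu[Y]\}\cap\{X:E_\mu[XY]\le k_2E_\mu[Y]\}$ as $Y$ ranges over $L^1_+(\mu)$. Hence $K$ is weak$^*$ compact.

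Next I would show that $\rho_1$ is weak$^*$ lower semicontinuous on $L^\infty(\mu)$. By part (i) of Theorem \ref{representation},
\[
\rho_1(X)=\sup_{P\in\mathcal{P}}\bigl(E_\mu[X G_P]-\rho_1^*(P)\bigr).
\]
For each fixed $P\in\mathcal{P}$ the functional $X\mapsto E_\mu[XG_P]-\rho_1^*(P)$ is weak$^*$ continuous, because $G_P\in L^1(\mu)$ and $\rho_1^*(P)$ is just a constant. The pointwise supremum of any family of weak$^*$ continuous affine functionals is weak$^*$ lower semicontinuous, so $\rho_1$ itself is weak$^*$ lower semicontinuous. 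Consequently the sublevel set $\{X\in L^\infty(\mu):\rho_1(X)\le\alpha^*\}$ is weak$^*$ closed.

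Finally, $\mathcal{X}_{\alpha^*}=K\cap\{\rho_1\le\alpha^*\}$ is a weak$^*$ closed subset of the weak$^*$ compact set $K$, hence weak$^*$ compact. I do not expect any serious obstacle here: the only thing to be careful about is that the dual representation of $\rho_1$ yields the correct form of affine functionals indexed by $L^1$--densities, which is exactly what makes them weak$^*$ continuous; this is precisely where the hypothesis of continuity from below (via Theorem \ref{representation}) is used.
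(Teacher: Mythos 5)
Your proof is correct, but it takes a genuinely different route from the paper's. The paper introduces the support function $\phi(Y):=\sup_{X\in\mathcal{X}_{\alpha^{*}}}E_{\mu}[XY]$ on $L^{1}(\mu)$, observes it is sublinear and dominated by $k_{2}\|\cdot\|_{L^{1}(\mu)}$, invokes a theorem of Simons to get weak$^{*}$ compactness of the associated one-sided polar $\hat{\mathcal{X}}_{\alpha^{*}}=\{X:E_{\mu}[XY]\le\phi(Y)\ \forall Y\in L^{1}(\mu)\}$, and then does the real work of proving $\hat{\mathcal{X}}_{\alpha^{*}}=\mathcal{X}_{\alpha^{*}}$ --- checking the pointwise bounds by testing against normalized indicators of exceptional sets, and checking $\rho_{1}\le\alpha^{*}$ via the dual representation. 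You instead decompose $\mathcal{X}_{\alpha^{*}}=K\cap\{\rho_{1}\le\alpha^{*}\}$, get compactness of the order interval $K$ from Banach--Alaoglu plus weak$^{*}$ closedness, and get closedness of the sublevel set from weak$^{*}$ lower semicontinuity of $\rho_{1}$ as a supremum of weak$^{*}$ continuous affine functionals $X\mapsto E_{\mu}[XG_{P}]-\rho_{1}^{*}(P)$. The two arguments ultimately rest on the same two ingredients (the order interval is cut out by $L^{1}$ test functionals, and the representation of Theorem \ref{representation} expresses $\rho_{1}$ through $L^{1}$ densities, which is exactly where continuity from below enters), but your packaging is the more standard one: it avoids the detour through the support function and the nontrivial identification $\hat{\mathcal{X}}_{\alpha^{*}}=\mathcal{X}_{\alpha^{*}}$, at the cost of invoking Banach--Alaoglu and the elementary fact that a weak$^{*}$ closed subset of a weak$^{*}$ compact set is compact, rather than a single packaged theorem. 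Your identification of lower semicontinuity of $\rho_{1}$ is also the reusable insight (it is the standard Fatou-property characterization of convex risk measures), whereas the paper re-derives the corresponding estimate inline.
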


\begin{proof}
Define $\phi(Y)=\sup\limits_{X\in\mathcal{X}_{\alpha}}E_{\mu}[X\cdot Y]$,
where $Y\in L^{1}(\mu)$. Then $\phi$ is a sublinear function on $L^{1}(\mu)$
and dominated by $M||\cdot||_{L^{1}(\mu)}$. Set
\begin{equation}
\hat{\mathcal{X}}_{\alpha}=\{X\in L^{\infty}(\mu):E_{\mu}[X\cdot Y]\leq
\phi(Y)\ \text{for any}\ Y\in L^{1}(\mu)\}.
\end{equation}
By Theorem 4.2 of chapter I in \cite{r11}, $\hat{\mathcal{X}}_{\alpha}$ is
compact in the weak$^{\ast}$ topology $\sigma(L^{\infty}(\mu),L^{1}(\mu))$.
Then we only need to show
\[
\mathcal{X}_{\alpha}=\hat{\mathcal{X}}_{\alpha}.
\]
Since $\mathcal{X}_{\alpha}\subset\hat{\mathcal{X}}_{\alpha}$ obviously, in
the next, we will show $\hat{\mathcal{X}}_{\alpha}\subset\mathcal{X}_{\alpha}$.

Firstly, for any $\hat{X}\in\hat{\mathcal{X}}_{\alpha}$, we show that
$K_{1}\leq\hat{X}\leq K_{2}$, $\mu$-a.e.. If there exists $\hat{X}\in
\hat{\mathcal{X}}_{\alpha}$ such that $\mu(\{\omega:\hat{X}(\omega
)<K_{1}\})\not =0$, then there will exist a constant $\epsilon>0$ such that
$\mu(A)\not =0$, where $A=\{\omega:\hat{X}(\omega)\leq K_{1}-\epsilon\}$. For
any $X\in\mathcal{X}_{\alpha}$, since $\hat{X}\leq K_{1}-\epsilon$ on $A$, we
have $\hat{X}\leq X-\epsilon$ on $A$. Let $h_{A}=-\dfrac{I_{A}}{\mu(A)}$.
Then
\[
E_{\mu}[\hat{X}h_{A}]=-\dfrac{1}{\mu(A)}E_{\mu}[\hat{X}I_{A}]\geq-\dfrac
{1}{\mu(A)}E_{\mu}[(X-\epsilon)I_{A}]=E_{\mu}[Xh_{A}]+\epsilon.
\]
Due to $X$ can be taken in $\mathcal{X}_{\alpha}$ arbitrarily, we have
\[
E_{\mu}[\hat{X}h_{A}]\geq\sup\limits_{X\in\mathcal{X}_{\alpha}}E_{\mu}%
[Xh_{A}]+\epsilon>\sup\limits_{X\in\mathcal{X}_{\alpha}}E_{\mu}[Xh_{A}%
]=\phi(h_{A}).
\]
Since $h_{A}\in L^{1}(\mu)$, it contradicts with $\hat{X}\in\hat{\mathcal{X}%
}_{\alpha}$. Thus, $\hat{X}\geq K_{1}$, $\mu$-a.e.. Similarly, we can prove
$\hat{X}\leq K_{2}$, $\mu$-a.e..

Next, we show for any $\hat{X}\in\hat{\mathcal{X}}_{\alpha}$, $\rho_{1}%
(\hat{X})\leq\alpha$. Since $\hat{X}\in\hat{\mathcal{X}}_{\alpha}$, for any
$P\in\mathcal{P}$,
\[
E_{P}[\hat{X}]=E_{\mu}[\hat{X}G_{P}]\leq\sup\limits_{X\in\mathcal{X}_{\alpha}%
}E_{\mu}[XG_{P}]=\sup\limits_{X\in\mathcal{X}_{\alpha}}E_{P}[X].
\]
Then
\[%
\begin{array}
[c]{r@{}l}%
\rho_{1}(\hat{X})= & \sup\limits_{P\in\mathcal{P}}\big(E_{P}[\hat{X}]-\rho
_{1}^{\ast}(P)\big)\\
\leq & \sup\limits_{P\in\mathcal{P}}\sup\limits_{X\in\mathcal{X}_{\alpha
^{\ast}}}\big(E_{P}[X]-\rho_{1}^{\ast}(P)\big)\\
= & \sup\limits_{X\in\mathcal{X}_{\alpha}}\sup\limits_{P\in\mathcal{P}%
}\big(E_{P}[X]-\rho_{1}^{\ast}(P)\big)\\
= & \sup\limits_{X\in\mathcal{X}_{\alpha}}\rho_{1}(X)\leq\alpha.
\end{array}
\]
Thus, $\hat{X}\in\mathcal{X}_{\alpha}$.
\end{proof}

\begin{remark}
If $\rho_{1}$ degenerates to be a sublinear expectation, the above result can
also be found in \cite{r10}.
\end{remark}

\begin{lemma}
\label{aux-minimax} If $\rho_{1}$ and $\rho_{2}$ are convex expectations
continuous from below, then
\begin{equation}
\inf\limits_{X\in\mathcal{X}_{\alpha}}\sup\limits_{Q\in\mathcal{Q}}%
\big(E_{Q}[K_{2}-X]-\rho_{2}^{\ast}(Q)\big)=\sup\limits_{Q\in\mathcal{Q}}%
\inf\limits_{X\in\mathcal{X}_{\alpha}}\big(E_{Q}[K_{2}-X]-\rho_{2}^{\ast
}(Q)\big).\label{3.3}%
\end{equation}

\end{lemma}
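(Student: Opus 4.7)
The easy direction $\sup_X\inf_Q\le\inf_Q\sup_X$ is automatic, so my plan is to establish the reverse inequality by invoking a Fan-type minimax theorem. The natural framework is to regard $\mathcal{X}_{\alpha^*}\subset L^{\infty}(\mu)$ with its inherited weak$^{*}$ topology $\sigma(L^{\infty},L^{1})$, and $\mathcal{Q}$ as a convex subset of $L^{1}(\mu)$ via the identification $Q\leftrightarrow H_{Q}$. By Lemma \ref{weak*-compact}, $\mathcal{X}_{\alpha^*}$ is convex and weak$^{*}$-compact, which is the crucial hypothesis the statement is set up to exploit.

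Next I would set $f(X,Q):=E_{Q}[X]+\rho^{*}_{2}(Q)$ and verify the minimax hypotheses term by term. For each fixed $Q$, the map $X\mapsto E_{Q}[X]=E_{\mu}[X H_{Q}]$ is $\sigma(L^{\infty},L^{1})$-continuous because $H_{Q}\in L^{1}(\mu)$, and adding the constant $\rho^{*}_{2}(Q)$ does not spoil continuity; since $X\mapsto f(X,Q)$ is affine, it is automatically concave and upper semi-continuous on $\mathcal{X}_{\alpha^*}$. For each fixed $X\in\mathcal{X}_{\alpha^*}$, the map $Q\mapsto E_{Q}[X]$ is linear in the density $H_{Q}$, and $\rho^{*}_{2}$ is convex (as the supremum of functions affine in $H_{Q}$); hence $Q\mapsto f(X,Q)$ is convex on $\mathcal{Q}$.

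With these pieces in place I would invoke a Fan-type minimax theorem (Fan's 1953 theorem, or equivalently the formulation in Proposition 1.1 of Chapter VI of Ekeland--Temam), which requires compactness, upper semi-continuity, and concavity only on the $X$-side, and mere convexity on the $Q$-side, without imposing any topology on $\mathcal{Q}$. This produces the asserted equality.

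The main thing to watch, rather than a genuine obstacle, is the choice of minimax theorem: one must use a version that asks nothing topological of $\mathcal{Q}$, since here $\mathcal{Q}$ is not assumed to be closed or compact in any sense. Once that is settled, everything else reduces to bookkeeping, because the substantive analytic content --- the weak$^{*}$-compactness of $\mathcal{X}_{\alpha^*}$, arising from the $L^{1}$-lower semi-continuity packaged into Lemma \ref{Fatou-property-1} and Lemma \ref{weak*-compact} --- has already been established upstream.
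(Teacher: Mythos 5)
Your proposal is correct and follows essentially the same route as the paper: both rest on the weak$^{*}$-compactness of $\mathcal{X}_{\alpha^{*}}$ from Lemma \ref{weak*-compact}, the affineness and $\sigma(L^{\infty},L^{1})$-continuity of $X\mapsto E_{Q}[X]+\rho^{*}_{2}(Q)$, the convexity of $Q\mapsto E_{Q}[X]+\rho^{*}_{2}(Q)$, and a Fan-type minimax theorem that imposes no topological conditions on $\mathcal{Q}$ (the paper cites Theorem 3.2 of Chapter I in Simons in place of your Fan/Ekeland--Temam reference). Your explicit remark that the chosen minimax theorem must ask nothing topological of $\mathcal{Q}$ is exactly the right point of care, and the rest matches the paper's argument.
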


\begin{proof}
Since for each $X\in\mathcal{X}_{\alpha}$, $E_{Q}[K_{2}-X]-\rho_{2}^{\ast}(Q)$
is a concave function on $\mathcal{Q}$ and for each $Q\in\mathcal{Q}$,
$E_{Q}[K_{2}-X]-\rho_{2}^{\ast}(Q)$ is a linear continuous function on
$L^{\infty}(\mu)$, with $\mathcal{X}_{\alpha}$ is compact in the weak$^{\ast}$
topology $\sigma(L^{\infty}(\mu),L^{1}(\mu))$, then by the minimax theorem
(Refer to Theorem 3.2 of chapter I in \cite{r11}), the equation (\ref{3.3}) holds.
\end{proof}

The following lemma shows that $\rho^{\ast}$ is lower semi-continuous.

\begin{lemma}
\label{Fatou-property-1} If $\rho$ is a convex expectation continuous from
below, for any sequence $\{Q_{n}\}_{n\geq1}\subset\mathcal{M}$ and $Q_{0}%
\in\mathcal{M}$ such that $H_{Q_{n}}$ converges to $H_{Q_{0}}$, $\mu$-a.e.,
then
\[
\rho^{\ast}(Q_{0})\leq\liminf\limits_{n\rightarrow\infty}\rho^{\ast}(Q_{n}).
\]

\end{lemma}

\begin{proof}
Set
\[
L_{+}^{\infty}(\mu)=\{X\in L^{\infty}(\mu):X\geq0\}.
\]
Then $\rho^{\ast}$ can be redefined as
\[
\rho^{\ast}(Q)=\sup\limits_{X\in L_{+}^{\infty}(\mu)}\big(E_{Q}[X]-\rho
(X)\big),
\]
since $E_{Q}[X]-\rho(X)=E_{Q}[X+m]-\rho(X+m)$ for any $Q\in\mathcal{M}$, $X\in
L^{\infty}(\mu)$ and positive real number $m$.

Take $J_{k}=\inf\limits_{n\geq k}H_{Q_{n}}$. Then $\{J_{k}\}_{k\geq1}$ is an
increasing sequence and $H_{Q_{0}}=\sup\limits_{k\geq1}J_{k}$. We have
\[%
\begin{array}
[c]{r@{}l}%
\rho^{\ast}(Q_{0})= & \sup\limits_{X\in L_{+}^{\infty}(\mu)}\big(E_{\mu
}[X(\sup\limits_{k\geq1}J_{k})]-\rho(X)\big)\\
= & \sup\limits_{k\geq1}\sup\limits_{X\in L_{+}^{\infty}(\mu)}\big(E_{\mu
}[XJ_{k}]-\rho(X)\big)\\
= & \sup\limits_{k\geq1}\sup\limits_{X\in L_{+}^{\infty}(\mu)}\big(E_{\mu
}[\inf\limits_{n\geq k}(XH_{Q_{n}})]-\rho(X)\big)\\
\leq & \sup\limits_{k\geq1}\sup\limits_{X\in L_{+}^{\infty}(\mu)}%
\inf\limits_{n\geq k}\big(E_{Q_{n}}[X]-\rho(X)\big)\\
\leq & \sup\limits_{k\geq1}\inf\limits_{n\geq k}\sup\limits_{X\in
L_{+}^{\infty}(\mu)}\big(E_{Q_{n}}[X]-\rho(X)\big)\\
= & \liminf\limits_{n\rightarrow\infty}\rho^{\ast}(Q_{n}).
\end{array}
\]
This completes the proof.
\end{proof}

\begin{lemma}
\label{auxiliary-result} If $\rho_{1}$ and $\rho_{2}$ are convex expectations
continuous from below and Assumption \ref{assumption} holds, then there exists
$Q^{\ast}\in\mathcal{Q}$ such that
\begin{equation}
\inf\limits_{X\in\mathcal{X}_{\alpha}}E_{Q^{\ast}}[K_{2}-X]-\rho_{2}^{\ast
}(Q^{\ast})=\sup\limits_{Q\in\mathcal{Q}}\inf\limits_{X\in\mathcal{X}_{\alpha
}}\big(E_{Q}[K_{2}-X]-\rho_{2}^{\ast}(Q)\big). \label{equation-3.5}%
\end{equation}

\end{lemma}

\begin{proof}
Take a positive constant $0<\epsilon<1$ and a sequence $\{Q_{n}\}_{n\geq
1}\subset\mathcal{Q}$ such that
\[
\inf\limits_{X\in\mathcal{X}_{\alpha}}E_{Q_{n}}[K_{2}-X]-\rho_{2}^{\ast}%
(Q_{n})\geq\gamma-\frac{\epsilon}{2^{n}},
\]
where $\gamma=\sup\limits_{Q\in\mathcal{Q}}\inf\limits_{X\in\mathcal{X}%
_{\alpha}}\big(E_{Q}[K_{2}-X]-\rho_{2}^{\ast}(Q)\big)$. By Lemma
\ref{aux-minimax},
\[
\gamma=\inf\limits_{X\in\mathcal{X}_{\alpha}}\sup\limits_{Q\in\mathcal{Q}%
}\big(E_{Q}[K_{2}-X]-\rho_{2}^{\ast}(Q)\big)=\inf\limits_{X\in\mathcal{X}%
_{\alpha}}\rho_{2}(K_{2}-X).
\]
Since
\[
\rho_{2}(0)\leq\inf\limits_{X\in\mathcal{X}_{\alpha}}\rho_{2}(K_{2}-X),
\]
then $\rho_{2}(0)\leq\gamma$. For any $n$,
\[
M-\rho_{2}^{\ast}(Q_{n})\geq\inf\limits_{X\in\mathcal{X}_{\alpha}}E_{Q_{n}%
}[K_{2}-X]-\rho_{2}^{\ast}(Q_{n})\geq\gamma-\frac{\epsilon}{2^{n}}\geq
\gamma-\epsilon,
\]
which leads to
\[
\rho_{2}^{\ast}(Q_{n})\leq M-\gamma+\epsilon\leq M-\rho_{2}(0)+1.
\]
For $v$ defined in Assumption \ref{assumption}, we have $\rho_{2}^{\ast
}(Q_{n})\leq v$ which implies $\{H_{Q_{n}}\}_{n\geq1}\subset\mathcal{H}_{v}$.

By the Koml\'{o}s Theorem, there exist a subsequence $\{Q_{n_{i}}\}_{i\geq1}$
of $\{Q_{n}\}_{n\geq1}$ and a random variable $H^{\ast}\in L^{1}(\mu)$ such
that
\[
\lim_{k\rightarrow\infty}\frac{1}{k}\sum_{i=1}^{k}H_{Q_{n_{i}}}=H^{\ast}%
,\quad\mu-a.e..
\]
Since $\mathcal{H}_{v}$ is a convex set and closed under the $\mu$-a.e.
convergence, then $H^{\ast}\in\mathcal{H}_{v}$. Denote $Q^{\ast}$ as the
corresponding probability measure of $H^{\ast}$. Since
\[
\lim_{k\rightarrow\infty}\frac{1}{k}\sum_{i=1}^{k}H_{Q_{n_{i}}}=H^{\ast}%
,\quad\mu-a.e.
\]
and
\[
1=E_{\mu}[H^{\ast}]=\lim_{k\rightarrow\infty}E_{\mu}[\frac{1}{k}\sum_{i=1}%
^{k}H_{Q_{n_{i}}}],
\]
we have $\{\frac{1}{k}\sum_{i=1}^{k}H_{Q_{n_{i}}}\}_{k\geq1}$ converges to
$H^{\ast}$ under $L^{1}(\mu)$ norm. By Lemma \ref{Fatou-property-2} and Lemma
\ref{Fatou-property-1},
\[%
\begin{array}
[c]{r@{}l}
& \inf\limits_{X\in\mathcal{X}_{\alpha}}E_{Q^{\ast}}[K_{2}-X]-\rho_{2}^{\ast
}(Q^{\ast})\\
\geq & \limsup\limits_{k\rightarrow\infty}\inf\limits_{X\in\mathcal{X}%
_{\alpha}}E_{\mu}[(K_{2}-X)(\dfrac{1}{k}\sum\limits_{i=1}^{k}H_{Q_{n_{i}}%
})]-\liminf\limits_{k\rightarrow\infty}\rho_{2}^{\ast}(\dfrac{1}{k}%
\sum\limits_{i=1}^{k}Q_{n_{i}})\\
\geq & \limsup\limits_{k\rightarrow\infty}\inf\limits_{X\in\mathcal{X}%
_{\alpha}}\dfrac{1}{k}\sum\limits_{i=1}^{k}\big(E_{Q_{n_{i}}}[(K_{2}%
-X)]-\rho_{2}^{\ast}(Q_{n_{i}})\big)\\
\geq & \limsup\limits_{k\rightarrow\infty}\dfrac{1}{k}\sum\limits_{i=1}%
^{k}\inf\limits_{X\in\mathcal{X}_{\alpha}}\big(E_{Q_{n_{i}}}[(K_{2}%
-X)]-\rho_{2}^{\ast}(Q_{n_{i}})\big)\\
\geq & \lim\limits_{k\rightarrow\infty}(\gamma-\dfrac{\epsilon}{k})=\gamma.
\end{array}
\]
Since $Q^{\ast}\in\mathcal{Q}$, we have
\[
\inf\limits_{X\in\mathcal{X}_{\alpha}}E_{Q^{\ast}}[K_{2}-X]-\rho_{2}^{\ast
}(Q^{\ast})=\sup\limits_{Q\in\mathcal{Q}}\inf\limits_{X\in\mathcal{X}_{\alpha
}}\big(E_{Q}[K_{2}-X]-\rho_{2}^{\ast}(Q)\big).
\]
This completes the proof.
\end{proof}

Summarizing all the lemmas above, we obtain the following proof of Theorem
\ref{minimax-result}:

\begin{proof}
By Lemma \ref{auxiliary-result}, there exists $Q^{\ast}\in\mathcal{Q}$ such
that
\[
\inf\limits_{X\in\mathcal{X}_{\alpha}}E_{Q^{\ast}}[K_{2}-X]-\rho_{2}^{\ast
}(Q^{\ast})=\sup\limits_{Q\in\mathcal{Q}}\inf\limits_{X\in\mathcal{X}_{\alpha
}}\big(E_{Q}[K_{2}-X]-\rho_{2}^{\ast}(Q)\big).
\]
If $X^{\ast}$ is the optimal test of (\ref{initial-problem}), then
\[
\sup\limits_{Q\in\mathcal{Q}}\big(E_{Q}[K_{2}-X^{\ast}]-\rho_{2}^{\ast
}(Q)\big)=\inf\limits_{X\in\mathcal{X}_{\alpha}}\sup\limits_{Q\in\mathcal{Q}%
}\big(E_{Q}[K_{2}-X]-\rho_{2}^{\ast}(Q)\big).
\]
By Lemma \ref{aux-minimax},
\[
\inf\limits_{X\in\mathcal{X}_{\alpha}}\sup\limits_{Q\in\mathcal{Q}}%
\big(E_{Q}[K_{2}-X]-\rho_{2}^{\ast}(Q)\big)=\sup\limits_{Q\in\mathcal{Q}}%
\inf\limits_{X\in\mathcal{X}_{\alpha}}\big(E_{Q}[K_{2}-X]-\rho_{2}^{\ast
}(Q)\big).
\]
Thus,
\[
\inf\limits_{X\in\mathcal{X}_{\alpha}}E_{Q^{\ast}}[K_{2}-X]-\rho_{2}^{\ast
}(Q^{\ast})=\sup\limits_{Q\in\mathcal{Q}}\big(E_{Q}[K_{2}-X^{\ast}]-\rho
_{2}^{\ast}(Q)\big).
\]
Since
\[
\inf\limits_{X\in\mathcal{X}_{\alpha}}E_{Q^{\ast}}[K_{2}-X]-\rho_{2}^{\ast
}(Q^{\ast})\leq E_{Q^{\ast}}[K_{2}-X^{\ast}]-\rho_{2}^{\ast}(Q^{\ast})\leq
\sup\limits_{Q\in\mathcal{Q}}\big(E_{Q}[K_{2}-X^{\ast}]-\rho_{2}^{\ast
}(Q)\big),
\]
then
\[
E_{Q^{\ast}}[K_{2}-X^{\ast}]-\rho_{2}^{\ast}(Q^{\ast})=\inf\limits_{X\in
\mathcal{X}_{\alpha}}E_{Q^{\ast}}[K_{2}-X]-\rho_{2}^{\ast}(Q^{\ast}),
\]
i.e.,
\[
E_{Q^{\ast}}[K_{2}-X^{\ast}]=\inf\limits_{X\in\mathcal{X}_{\alpha}}E_{Q^{\ast
}}[K_{2}-X].
\]
This completes the proof.
\end{proof}

\begin{example}
\label{example-helpless} Consider the probability space $(\Omega
,\mathcal{F},\mu)$, where $\Omega=\{0,1\}$, $\mathcal{F}=\{\emptyset
,\{0\},\{1\},\Omega\}$. Set
\[
\mu(\omega)=\Bigg\{%
\begin{array}
[c]{l@{}c}%
\frac{1}{2}, & \quad\text{if }\omega=0,\\
\frac{1}{2}, & \quad\text{if }\omega=1,
\end{array}
\text{\ \ and\ \ }Q_{0}(\omega)=\Bigg\{%
\begin{array}
[c]{l@{}c}%
\frac{3}{4}, & \quad\text{if }\omega=0,\\
\frac{1}{4}, & \quad\text{if }\omega=1
\end{array}
.
\]
Let $K_{1}=0$, $K_{2}=1$, $\alpha=\frac{1}{2}$, $\rho_{1}(X)=E_{\mu}[X]$ and
$\rho_{2}(X)=\ln E_{Q_{0}}[e^{X}]$. We solve the problem
(\ref{initial-problem}). Let $Q=qI_{\{0\}}+(1-q)I_{\{1\}}$, where $0\leq
q\leq1$. Then
\[
\rho_{2}^{\ast}(Q)=E_{Q_{0}}[\frac{dQ}{dQ_{0}}\ln\frac{dQ}{dQ_{0}}]=q\ln
q+(1-q)\ln(1-q)-q\ln3+2\ln2.
\]
Let $X=x_{0}I_{\{0\}}+x_{1}I_{\{1\}}$, where $0\leq x_{0},$ $x_{1}\leq1$. If
$X\in\mathcal{X}_{\alpha}$, then $\frac{1}{2}x_{0}+\frac{1}{2}x_{1}\leq
\frac{1}{2}$, i.e., $x_{0}\leq1-x_{1}$. When $q=\frac{3}{e+3}$, $\inf
\limits_{X\in\mathcal{X}_{\alpha}}E_{Q}[1-X]-\rho_{2}^{\ast}(Q)$ attains its
maximum. Thus,
\[
Q^{\ast}=\frac{3}{e+3}I_{\{0\}}+\frac{e}{e+3}I_{\{1\}}\text{ and }X^{\ast
}=I_{\{0\}}.
\]

\end{example}

\subsection{The existence of a representative probability $P^{\ast}$}

In the rest of this paper, $Q^{\ast}$ is always\ the probability measure found
in Theorem \ref{minimax-result}. Define
\[
\gamma_{\alpha}=\inf\limits_{X\in\mathcal{X}_{\alpha}}E_{Q^{\ast}}[K_{2}-X].
\]
If $\gamma_{\alpha}=0$, then it is trivial and the optimal test $X^{\ast
}=K_{2}$, $Q^{\ast}$-a.e.. In the following, we only consider the case
$\gamma_{\alpha}>0$.

\begin{lemma}
\label{dual-problem} If $\gamma_{\alpha}>0$, $\rho_{1}$ and $\rho_{2}$ are
convex expectations continuous from below and Assumption \ref{assumption}
holds, then for any optimal test $X^{\ast}$ of (\ref{initial-problem}), we
have $X^{\ast}\in\mathcal{X}^{\gamma_{\alpha}}$ and
\begin{equation}
\rho_{1}(X^{\ast})=\inf\limits_{X\in\mathcal{X}^{\gamma_{\alpha}}}\rho
_{1}(X)=\alpha, \label{2nd-dual-problem}%
\end{equation}
where $\mathcal{X}^{\gamma_{\alpha}}=\{X:E_{Q^{\ast}}[K_{2}-X]\leq
\gamma_{\alpha},K_{1}\leq X\leq K_{2},X\in L^{\infty}(\mu)\}$.
\end{lemma}

\begin{proof}
$X^{\ast}\in\mathcal{X}^{\gamma_{\alpha}}$ comes from Theorem
\ref{minimax-result}. For any $X\in\mathcal{X}_{\alpha}$, if $\rho
_{1}(X)<\alpha$, we claim $E_{Q^{\ast}}[K_{2}-X]>\gamma_{\alpha}$. If not,
then there will exist a test $X^{\prime}\in\mathcal{X}_{\alpha}$ such that
$\rho_{1}(X^{\prime})<\alpha$ and
\[
E_{Q^{\ast}}[K_{2}-X^{\prime}]=\gamma_{\alpha}.
\]
Set
\[
\rho_{1}(X^{\prime})=\alpha^{\prime}<\alpha
\]
and
\[
X^{\prime\prime}=(X^{\prime}+\alpha-\alpha^{\prime})\wedge K_{2}.
\]
By the definition of convex expectation,
\[
\rho_{1}(X^{\prime\prime})\leq\rho_{1}(X^{\prime}+\alpha-\alpha^{\prime}%
)=\rho_{1}(X^{\prime})+\alpha-\alpha^{\prime}=\alpha,
\]
which implies that $X^{\prime\prime}\in\mathcal{X}_{\alpha}$. As
$X^{\prime\prime}\in\mathcal{X}_{\alpha}$ and $X^{\prime\prime}\geq X^{\prime
}$, we have $E_{Q^{\ast}}[K_{2}-X^{\prime\prime}]=E_{Q^{\ast}}[K_{2}-X^{\prime
}]$, i.e., $E_{Q^{\ast}}[X^{\prime\prime}]=E_{Q^{\ast}}[X^{\prime}]$. Set
$A=\{X^{\prime}\not =K_{2}\}$. Since
\[
X^{\prime\prime}-X^{\prime}\geq0\quad\text{and}\quad E_{Q^{\ast}}%
[X^{\prime\prime}-X^{\prime}]\geq0,
\]
we have $X^{\prime\prime}=X^{\prime}$, $Q^{\ast}$-a.e., which implies that
$Q^{\ast}(A)=0$ and $X^{\prime}=K_{2}$, $Q^{\ast}$-a.e.. Then $\gamma_{\alpha
}=0$, which contradicts with $\gamma_{\alpha}>0$.

Thus, for any $X\in\mathcal{X}^{\gamma_{\alpha}}$, we have $\rho_{1}%
(X)\geq\alpha$. With $\rho_{1}(X^{\ast})=\alpha$, the result holds.
\end{proof}

\begin{theorem}
\label{Second-minimax-result} Suppose that $\gamma_{\alpha}>0$, $\rho_{1}$ and
$\rho_{2}$ are convex expectations continuous from below and Assumption
\ref{assumption} holds. Then there exists $P^{\ast}\in\mathcal{P}$ such that
for any optimal test $X^{\ast}$ of (\ref{initial-problem}),
\[
E_{P^{\ast}}[X^{\ast}]=\inf\limits_{X\in\mathcal{X}^{\gamma_{\alpha}}%
}E_{P^{\ast}}[X].
\]

\end{theorem}

\begin{proof}
Set $Y=K_{2}-X$, $Y^{\ast}=K_{2}-X^{\ast}$ and
\[
\mathcal{Y}_{\gamma_{\alpha}}=\{Y:E_{Q^{\ast}}[Y]\leq\gamma_{\alpha},0\leq
Y\leq K_{2}-K_{1},Y\in L^{\infty}(\mu)\}.
\]
By Lemma \ref{dual-problem},
\[
\rho_{1}(K_{2}-Y^{\ast})=\inf\limits_{Y\in\mathcal{Y}_{\gamma_{\alpha}}}%
\rho_{1}(K_{2}-Y),
\]
i.e.,
\begin{equation}
\sup\limits_{P\in\mathcal{P}}\big(E_{P}[K_{2}-Y^{\ast}]-\rho_{1}^{\ast
}(P)\big)=\inf\limits_{Y\in\mathcal{Y}_{\gamma_{\alpha}}}\sup\limits_{P\in
\mathcal{P}}\big(E_{P}[K_{2}-Y]-\rho_{1}^{\ast}(P)\big). \label{3.4}%
\end{equation}
Applying similar analysis as in Lemma \ref{weak*-compact}, we obtain that
$\mathcal{Y}_{\gamma_{\alpha}}$ is compact in the topology $\sigma(L^{\infty
}(\mu),L^{1}(\mu))$. By the minimax theorem,
\begin{equation}
\inf\limits_{Y\in\mathcal{Y}_{\gamma_{\alpha}}}\sup\limits_{P\in\mathcal{P}%
}\big(E_{P}[K_{2}-Y]-\rho_{1}^{\ast}(P)\big)=\sup\limits_{P\in\mathcal{P}}%
\inf\limits_{Y\in\mathcal{Y}_{\gamma_{\alpha}}}\big(E_{P}[K_{2}-Y]-\rho
_{1}^{\ast}(P)\big). \label{3.5}%
\end{equation}

Now we prove that there exists a probability measure $P^{\ast}\in\mathcal{P}$
such that
\begin{equation}
\inf\limits_{Y\in\mathcal{Y}_{\gamma_{\alpha}}}\big(E_{P^{\ast}}[K_{2}%
-Y]-\rho_{1}^{\ast}(P^{\ast})\big)=\sup\limits_{P\in\mathcal{P}}%
\inf\limits_{Y\in\mathcal{Y}_{\gamma_{\alpha}}}\big(E_{P}[K_{2}-Y]-\rho
_{1}^{\ast}(P)\big). \label{3.6}%
\end{equation}
If we replace $X$ by $Y$, $\mathcal{X}_{\alpha}$ by $\mathcal{Y}%
_{\gamma_{\alpha}}$, $P$ by $Q$ and $\rho_{1}^{\ast}$ by $\rho_{2}^{\ast}$ in
(\ref{equation-3.5}), then (\ref{equation-3.5}) becomes (\ref{3.6}). Using the
same proof method as in Lemma \ref{auxiliary-result}, we deduce that (\ref{3.6}) holds.

By (\ref{3.4}), (\ref{3.5}) and (\ref{3.6}),
\[
\inf\limits_{Y\in\mathcal{Y}_{\gamma_{\alpha}}}\big(E_{P^{\ast}}[K_{2}%
-Y]-\rho_{1}^{\ast}(P^{\ast})\big)=\sup\limits_{P\in\mathcal{P}}%
\big(E_{P}[K_{2}-Y^{\ast}]-\rho_{1}^{\ast}(P)\big).
\]
Since
\[%
\begin{array}
[c]{r@{}l}%
\inf\limits_{Y\in\mathcal{Y}_{\gamma_{\alpha}}}E_{P^{\ast}}[K_{2}-Y]-\rho
_{1}^{\ast}(P^{\ast})\leq & E_{P^{\ast}}[K_{2}-Y^{\ast}]-\rho_{1}^{\ast
}(P^{\ast})\\
\leq & \sup\limits_{P\in\mathcal{P}}\big(E_{P}[K_{2}-Y^{\ast}]-\rho_{1}^{\ast
}(P)\big),
\end{array}
\]
we have
\[
E_{P^{\ast}}[K_{2}-Y^{\ast}]-\rho_{1}^{\ast}(P^{\ast})=\inf\limits_{Y\in
\mathcal{Y}_{\gamma_{\alpha}}}E_{P^{\ast}}[K_{2}-Y]-\rho_{1}^{\ast}(P^{\ast
}).
\]
Thus,
\[
E_{P^{\ast}}[K_{2}-Y^{\ast}]=\inf\limits_{Y\in\mathcal{Y}_{\gamma_{\alpha}}%
}E_{P^{\ast}}[K_{2}-Y],
\]
i.e.,
\[
E_{P^{\ast}}[X^{\ast}]=\inf\limits_{X\in\mathcal{X}^{\gamma_{\alpha}}%
}E_{P^{\ast}}[X].
\]
This completes the proof.
\end{proof}

\begin{example}
\label{example-discrete} Consider the probability space $(\Omega
,\mathcal{F},\mu)$, where $\Omega$, $\mathcal{F}$ and $\mu$ are defined as the
same as in Example \ref{example-helpless}. Set $K_{1}=0$, $K_{2}=1$,
$\alpha=\ln(e+3)-2\ln2$, $\rho_{1}(X)=\ln E_{P_{0}}[e^{X}]$ and $\rho
_{2}(X)=E_{\mu}[X]$, where
\[
P_{0}(\omega)=\Bigg\{%
\begin{array}
[c]{l@{}c}%
\frac{1}{4}, & \quad\text{if }\omega=0,\\
\frac{3}{4}, & \quad\text{if }\omega=1.
\end{array}
\]
We solve the problem (\ref{initial-problem}). It is easy to check that
\[
\inf\limits_{X\in\mathcal{X}_{\alpha}}E_{\mu}(1-X)=\frac{1}{2},
\]
i.e., $\gamma_{\alpha}=\frac{1}{2}$. By Lemma \ref{dual-problem}, to solve the
problem (\ref{initial-problem}) is equivalent to solve the following problem:
\begin{equation}
\text{minimize}\quad\rho_{1}(X), \label{problem-initial-extend}%
\end{equation}
over the set $\mathcal{X}^{\gamma_{\alpha}}=\{X:E_{\mu}[X]\geq\frac{1}%
{2},0\leq X\leq1\}$. Let $X=x_{0}I_{\{0\}}+x_{1}I_{\{1\}}$, where $0\leq
x_{0},x_{1}\leq1$. If $X\in\mathcal{X}^{\gamma_{\alpha}}$, then $x_{0}%
\geq1-x_{1}$. Let $P=pI_{\{0\}}+(1-p)I_{\{1\}}$, where $0\leq p\leq1$. Then
\[
\rho_{1}^{\ast}(P)=E_{P_{0}}[\frac{dP}{dP_{0}}\ln\frac{dP}{dP_{0}}]=2\ln2+p\ln
p+(1-p)\ln(1-p)-(1-p)\ln3.
\]
When $p=\frac{e}{e+3}$, $\inf\limits_{X\in\mathcal{X}^{\gamma_{\alpha}}}%
E_{P}[X]-\rho_{1}^{\ast}(P)$ attains its maximum. Thus,
\[
P^{\ast}=\frac{e}{e+3}I_{\{0\}}+\frac{3}{e+3}I_{\{1\}}\text{ \ and \ }X^{\ast
}=I_{\{0\}}.
\]

\end{example}

\subsection{Main result\label{main results}}

\begin{theorem}
\label{main-result} If $\rho_{1}$ and $\rho_{2}$ are convex expectations
continuous from below and Assumption \ref{assumption} holds, then there exist
$P^{\ast}\in\mathcal{P}$ and $Q^{\ast}\in\mathcal{Q}$ such that for any
optimal test $X^{\ast}$ of (\ref{initial-problem}), it can be expressed as
\begin{equation}
X^{\ast}=K_{2}I_{\{H_{Q^{\ast}}>zG_{P^{\ast}}\}}+BI_{\{H_{Q^{\ast}%
}=zG_{P^{\ast}}\}}+K_{1}I_{\{H_{Q^{\ast}}<zG_{P^{\ast}}\}},\quad\mu-a.e.,
\label{form}%
\end{equation}
where $z\in\lbrack0,+\infty)\cup\{+\infty\}$ and $B$ is a random variable
taking values in the random interval $[K_{1},K_{2}]$.
\end{theorem}

\begin{proof}
We divide our proof into two cases:

i) The case $\gamma_{\alpha}>0$. By Theorem \ref{Second-minimax-result},
$X^{\ast}$ is the optimal test of the following problem:
\[
\text{minimize}\quad E_{P^{\ast}}[X],
\]
over the set $\mathcal{X}^{\gamma_{\alpha}}=\{X:E_{Q^{\ast}}[K_{2}%
-X]\leq\gamma_{\alpha},K_{1}\leq X\leq K_{2},X\in L^{\infty}(\mu)\}$. Set
\[
Z^{\ast}=\frac{K_{2}-X^{\ast}}{K_{2}-K_{1}},\, Z=\frac{K_{2}-X}{K_{2}-K_{1}%
},\,\gamma_{\alpha}^{\prime}=\frac{\gamma_{\alpha}}{E_{Q^{\ast}}[K_{2}-K_{1}%
]},\,\frac{d\hat{P}}{dP^{\ast}}=\frac{K_{2}-K_{1}}{E_{P^{\ast}}[K_{2}-K_{1}%
]}\,\text{and}\,\frac{d\hat{Q}}{dQ^{\ast}}=\frac{K_{2}-K_{1}}{E_{Q^{\ast}%
}[K_{2}-K_{1}]}.
\]
Then $Z^{\ast}$ is the optimal test of the problem:
\begin{equation}
\text{maximize}\quad E_{\hat{P}}[Z], \label{linear-form}%
\end{equation}
over the set $\mathcal{Z}_{\gamma_{\alpha}^{\prime}}=\{Z:E_{\hat{Q}}%
[Z]\leq\gamma_{\alpha}^{\prime},0\leq Z\leq1,Z\in L^{\infty}(\mu)\}$.

By the classical Neyman-Pearson lemma (see \cite{r1} or Theorem A.30 in
\cite{r12}), any optimal test $Z^{\ast}$ of (\ref{linear-form}) has the form
\begin{equation}
Z^{\ast}=I_{\{z^{\prime}H_{\hat{Q}}<G_{\hat{P}}\}}+B^{\prime}\cdot
I_{\{z^{\prime}H_{\hat{Q}}=G_{\hat{P}}\}},\quad\mu-a.e.
\end{equation}
for some constant $z^{\prime}\geq0$ and random variable $B^{\prime}$ taking
values in the interval $[0,1]$. Since
\[
\frac{d\hat{P}}{dP^{\ast}}=\frac{K_{2}-K_{1}}{E_{P^{\ast}}[K_{2}-K_{1}]}%
\quad\text{and}\quad\frac{d\hat{Q}}{dQ^{\ast}}=\frac{K_{2}-K_{1}}{E_{Q^{\ast}%
}[K_{2}-K_{1}]},
\]
if we take (with conventions $+\infty=\frac{1}{0}$ and $0=\frac{0}{0}$)
\[
B=K_{2}-(K_{2}-K_{1})B^{\prime}\quad\text{and}\quad z=\frac{E_{Q^{\ast}}%
[K_{2}-K_{1}]}{z^{\prime}E_{P^{\ast}}[K_{2}-K_{1}]},
\]
then $z^{\prime}\in(0, +\infty)\cup\{+\infty\}$ and
\begin{equation}
\label{representation-form}X^{\ast}=K_{2}I_{\{H_{Q^{\ast}}>zG_{P^{\ast}}%
\}}+BI_{\{H_{Q^{\ast}}=zG_{P^{\ast}}\}}+K_{1}I_{\{H_{Q^{\ast}}<zG_{P^{\ast}%
}\}},\quad\mu-a.e..
\end{equation}

ii) The case $\gamma_{\alpha}=0$. For this case, $X^{\ast}=K_{2}$, $Q^{\ast}%
$-a.e.. This is a special case of (\ref{representation-form}) when $z$ equals
$0$.
\end{proof}

\begin{example}
\label{main-example} Except $\rho_{2}(X)=\ln E_{Q_{0}}[e^{X}]$ where $Q_{0}$
is defined as in Example \ref{example-helpless}, all the notations in this
example are defined as the same as in Example \ref{example-discrete}. We solve
the problem (\ref{initial-problem}).

Denote $\mathcal{Z}=\{X:0\leq X\leq1,E_{\mu}[X]\leq\frac{1}{2}\}$. By Example
\ref{example-discrete}, we have $\sup\limits_{X\in\mathcal{X}_{\alpha}}E_{\mu
}[X]=\frac{1}{2}$. Then $\mathcal{X}_{\alpha}\subset\mathcal{Z}$ and
\begin{equation}
\inf\limits_{X\in\mathcal{Z}}\rho_{2}(1-X)\leq\inf\limits_{X\in\mathcal{X}%
_{\alpha}}\rho_{2}(1-X). \label{au-example}%
\end{equation}
Take $\hat{X}=I_{\{0\}}$. By Example \ref{example-helpless},
\[
\rho_{2}(1-\hat{X})=\inf\limits_{X\in\mathcal{Z}}\rho_{2}(1-X).
\]
Since $\hat{X}\in\mathcal{X}_{\alpha}$, with (\ref{au-example}), we have
\[
\rho_{2}(1-\hat{X})=\inf\limits_{X\in\mathcal{X}_{\alpha}}\rho_{2}(1-X),
\]
which implies $I_{\{0\}}$ is the optimal test. Furthermore, if we take
$Q^{\ast}=\frac{3}{e+3}I_{\{0\}}+\frac{e}{e+3}I_{\{1\}}$ and $P^{\ast}%
=\frac{e}{e+3}I_{\{0\}}+\frac{3}{e+3}I_{\{1\}}$ as in Examples
\ref{example-helpless} and \ref{example-discrete}, then
\[
I_{\{0\}}=I_{\{\frac{3}{e}H_{Q^{\ast}}>G_{P^{\ast}}\}}.
\]

\end{example}

\section{Application}

In a financial market, if an investor does not have enough initial wealth,
then he may fail to (super-) hedge an contingent claim and will face some
shortfall risk. In this case, we need a criterion expressing the investor's
attitude towards the shortfall risk (see \cite{fo-le-1999,fo-le-2000,
fo-2002-b, r6}). F\"{o}llmer and Leukert \cite{fo-le-2000} use the expectation
of the shortfall weighted by the loss function as a shortfall risk measure. In
this section, we use a general measure, the convex risk measure, to evaluate
the shortfall and consequently minimize such a shortfall risk.

In more details, we adopt the same financial market model as in
\cite{fo-le-2000}. The discounted price process of the underlying asset is
described as a semimartingale $S=(S_{t})_{t\in\lbrack0,T]}$ on a complete
probability space $(\Omega,\mathcal{F},\mu)$. The information structure is
given by a filtration $F=\{\mathcal{F}_{t}\}_{0\leq t\leq T}$ with
$\mathcal{F}_{T}=\mathcal{F}$. Let $\mathcal{P}$ denote the set of equivalent
martingale measures. we assume that $\mathcal{F}_{0}$ is trivial and
$\mathcal{P\neq\emptyset}$. For an initial investment $X_{0}\geq0$ and a
portfolio process $\pi$ such that the wealth process%
\begin{equation}
X_{t}=X_{0}+\int_{0}^{t}\pi_{s}dS_{s}\;\;\forall t\in\lbrack0,T]
\label{wealth equation}%
\end{equation}
is well defined. A strategy $(X_{0},\pi)$ is called admissible if the
corresponding wealth process $X$ is nonnegative. For a given nonnegative
contingent claim $H\in L^{\infty}(\mu)$, we define that
\[
U_{0}=\underset{P\in\mathcal{P}}{\sup}E_{P}[H].
\]
It is well known that if the investor's initial wealth $\tilde{X}_{0}<U_{0}$,
then some shortfall $(H-X_{T})^{+}$ will occur at time $T$.

In this section, we introduce a general convex expectation $\rho$ to measure
the shortfall $(H-X_{T})^{+}$.

\begin{definition}
For a given convex expectation $\rho$, the shortfall risk is defined as
\[
\rho((H-X_{T})^{+})\text{.}%
\]

\end{definition}

Consequently, the investor wants to find an admissible strategy $(X_{0},\pi)$
which minimizes the shortfall risk and control his initial investment
$X_{0}\leq\tilde{X}_{0}$. Thus, we will solve the following optimization
problem:%
\begin{equation}%
\begin{array}
[c]{c}%
\underset{(X_{0},\pi)}{\text{min}}\;\rho((H-X_{T})^{+}),\\
\text{subject to }X_{0}\leq\tilde{X}_{0},%
\end{array}
\label{convex risk optimization-1}%
\end{equation}
where $\tilde{X}_{0}$ is the initial wealth of the investor.

Now we show that the optimal $X_{T}^{\ast}$ must satisfy $0\leq X_{T}^{\ast
}\leq H$. In fact, if $P(X_{T}^{\ast}>H)>0$, we can construct a feasible
terminal wealth $\tilde{X}_{T}$ such that $0\leq\tilde{X}_{T}\leq H$ and
$(H-\tilde{X}_{T})^{+}<(H-X_{T}^{\ast})^{+}$. Thus, $\rho((H-\tilde{X}%
_{T})^{+})<\rho((H-X_{T}^{\ast})^{+})$ by the monotonicity property of $\rho$.
This leads to a contradiction.

Thus, without loss of generality we assume that $0\leq X_{T}\leq H$ and
(\ref{convex risk optimization-1}) becomes%
\begin{equation}%
\begin{array}
[c]{c}%
\underset{0\leq X_{T}\leq H}{\text{min}}\rho(H-X_{T}),\\
\text{subject to }\underset{P\in\mathcal{P}}{\sup}E_{P}[X_{T}]\leq\tilde
{X}_{0}.
\end{array}
\label{convex risk optimization-2}%
\end{equation}

By Theorem \ref{main-result} and the classical Neyman-Pearson lemma
(Proposition 4.1 in \cite{fo-le-2000}),%
\[
X_{T}^{\ast}=HI_{\{zH_{Q^{\ast}}>G_{P^{\ast}}\}}+BI_{\{zH_{Q^{\ast}%
}=G_{P^{\ast}}\}},\quad\mu-a.e.,
\]
where%
\[
z=\sup\{\tilde{z}\mid\int_{\{\tilde{z}H_{Q^{\ast}}>G_{P^{\ast}}\}}HdP^{\ast
}\leq\tilde{X}_{0}\}
\]
and%
\[
B=\left\{
\begin{array}
[c]{ll}%
\frac{\tilde{X}_{0}-\int_{\{zH_{Q^{\ast}}>G_{P^{\ast}}\}}HdP^{\ast}}%
{\int_{\{zH_{Q^{\ast}}=G_{P^{\ast}}\}}HdP^{\ast}}, & \;\;\text{when }P^{\ast
}[\{H>0\}\cap\{zH_{Q^{\ast}}=G_{P^{\ast}}\}]>0;\\
0, & \;\;\text{otherwise.}%
\end{array}
\right.
\]

Then by the optional decomposition theorem (see \cite{kra} and \cite{fo-kab}),
we obtain the optimal strategy $(\tilde{X}_{0},\pi^{\ast})$ corresponding to
$X_{T}^{\ast}$.

\begin{remark}
Instead of minimizing the convex risk measure under the initial investment
constraint, we can solve the following essentially equivalent problem: fix a
bound on the convex risk measure and minimize the initial investment.
\end{remark}

\begin{remark}
We assume that the given nonnegative contingent claim $H\in L^{\infty}(\mu)$.
If $H\in L^{1}(\mu)$, then we can use Theorem \ref{auxiliary-theorem} in the appendix.
\end{remark}

\section{Appendix}

In this appendix, we first prove that when the convex expectations are
continuous from above, Assumption \ref{assumption} holds naturally. Then an
example is given to show that Assumption (\ref{assumption}) is only a
sufficient condition for the existence of $Q^{\ast}$. Finally, we give the
Neyman-Pearson lemma for convex expectations on $L^{1}(\mu)$.

\begin{definition}
\label{continuous from above} We call a convex expectation $\rho$ is
continuous from above iff for any sequence $\{X_{n}\}_{n\geq1}\subset
L^{\infty}(\mu)$ decreases to some $X\in L^{\infty}(\mu)$, then $\rho
(X_{n})\to\rho(X)$.
\end{definition}

\begin{proposition}
\label{a-result-from above} If $\rho_{1}$ and $\rho_{2}$ are continuous from
above, then Assumption \ref{assumption} holds.
\end{proposition}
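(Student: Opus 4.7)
The plan is to reduce the $\mu$-a.e.\ closedness of $\mathcal{G}_u$ and $\mathcal{H}_v$ to an $L^1$-convergence statement, the key ingredient being uniform integrability of these level sets in $L^1(\mu)$. I will spell out the argument for $\mathcal{G}_u$; the one for $\mathcal{H}_v$ is identical, with $\rho_2$, $v$ replacing $\rho_1$, $u$.

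The heart of the proof is showing that $\{G_P : \rho_1^*(P) \leq u\}$ is uniformly integrable. I would argue by contradiction via the Dunford--Pettis criterion: if not, there would exist $\epsilon_0 > 0$, a sequence $\{P_n\} \subset \mathcal{P}$ with $\rho_1^*(P_n) \leq u$, and sets $A_n$ with $\mu(A_n) \to 0$ but $E_\mu[G_{P_n} I_{A_n}] \geq \epsilon_0$ for all $n$. Passing to a subsequence with $\sum_n \mu(A_n) < \infty$, Borel--Cantelli yields $Z_n := \sup_{k \geq n} I_{A_k} \downarrow 0$ $\mu$-a.s. For any fixed $N > 0$, $N Z_n \in L^\infty(\mu)$ and $N Z_n \downarrow 0$, so continuity from above of $\rho_1$ gives $\rho_1(N Z_n) \to \rho_1(0)$. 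On the other hand, for every $k \geq n$, the dual representation of Theorem \ref{representation}(i) gives
\[
\rho_1(N Z_n) \geq E_{P_k}[N Z_n] - \rho_1^*(P_k) \geq N \cdot E_\mu[G_{P_k} I_{A_k}] - u \geq N \epsilon_0 - u,
\]
so letting $n \to \infty$ leaves $\rho_1(0) \geq N \epsilon_0 - u$. Since $N > 0$ was arbitrary and $\rho_1(0) < \infty$, this is a contradiction. This scaling step is the main obstacle: a single bounded test function would yield only one comparison, so one must scale by arbitrarily large $N$ before the sequence is driven down to zero in order to exploit continuity from above nontrivially.

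With uniform integrability in hand the closure property is routine. For any sequence $\{P_n\} \subset \mathcal{P}$ with $\rho_1^*(P_n) \leq u$ and $G_{P_n} \to G_0$ $\mu$-a.e., Vitali's convergence theorem upgrades the a.e.\ convergence to $L^1(\mu)$-convergence, so $G_0 \geq 0$ with $E_\mu[G_0] = 1$; hence $G_0 = G_{P_0}$ for some $P_0 \in \mathcal{M}$. Lemma \ref{Fatou-property-1} then gives $\rho_1^*(P_0) \leq \liminf_n \rho_1^*(P_n) \leq u$, so $P_0 \in \mathcal{P}$ and $G_0 \in \mathcal{G}_u$. The analogous argument applied to $\rho_2$ closes $\mathcal{H}_v$ under $\mu$-a.e.\ convergence, verifying Assumption \ref{assumption}.
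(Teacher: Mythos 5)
Your proof is correct and follows essentially the same route as the paper: establish uniform integrability of the level set, use Vitali's theorem to identify the a.e.\ limit as a probability density, and apply Lemma \ref{Fatou-property-1} to bound the penalty of the limit by $u$ (resp.\ $v$). The only difference is that the paper obtains the uniform integrability by citing Theorem 3.6 of Kaina and R\"uschendorf, whereas you reprove it directly via the scaling-and-contradiction argument exploiting continuity from above; your argument is a valid self-contained substitute for that citation.
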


\begin{proof}
We only show the result holds for $\rho_{1}$.

For any $u>\max\{0,M-\rho_{1}(0)+1\}$, we have $u>\max\{0,-\rho_{1}(0)\}$. By
Theorem 3.6 in \cite{r13}, $\mathcal{G}_{u}$ is uniformly integrable. For any
sequence $\{G_{P_{n}}\}_{n\geq1}\subset\mathcal{G}_{u}$ that converges to
$G_{\hat{P}}$, $\mu$-a.e., since $\{G_{P_{n}}\}_{n\geq1}$ is uniformly
integrable,%
\[
E_{\mu}[G_{\hat{P}}]=\lim\limits_{n\rightarrow\infty}E_{\mu}[G_{P_{n}}]=1,
\]
which shows $\hat{P}\in\mathcal{M}$. On the other hand, for any $u>\max
\{0,M-\rho_{1}(0)+1\}$, by Lemma \ref{Fatou-property-1}, we have
\[
\rho^{\ast}(\hat{P})\leq\liminf\limits_{n\rightarrow\infty}\rho^{\ast}%
(P_{n})\leq u.
\]
Then $G_{\hat{P}}\in\mathcal{G}_{u}$. Thus, $\mathcal{G}_{u}$ is closed under
the $\mu$-a.e. convergence.
\end{proof}

Now we show that even if Assumption (\ref{assumption}) does not hold, the
probability measure $Q^{\ast}$ may still exist.

\begin{example}
\label{example-continuous} Consider the probability space $(\Omega
,\mathcal{B},\mu)$, where $\Omega$ is the interval $[0,1]$, $\mathcal{B}$ is
the collection of all Borel sets in $[0,1]$ and $\mu$ is the Lebesgue measure
defined on $[0,1]$. Set $K_{1}=0$, $K_{2}=1$, $\alpha=\frac{3-e}{e-1}$,
$\rho_{1}(X)=E_{P}[X]$ and $\rho_{2}(X)=\ln E_{\mu}[e^{X}]$, where
\[
\frac{dP}{d\mu}=\Bigg\{%
\begin{array}
[c]{l@{}c}%
\frac{e+1}{e-1}, & \quad\omega\in\lbrack0,\frac{e-2}{e-1}],\\
\frac{3-e}{e-1}, & \quad\omega\in(\frac{e-2}{e-1},1].
\end{array}
\]
To solve the problem (\ref{initial-problem}), one can check that\ Assumption
\ref{assumption} does not hold. Let
\[
X^{\ast}=I_{(\frac{e-2}{e-1},1]}\text{ \ and\ }\frac{dQ^{\ast}}{d\mu}=\Bigg\{%
\begin{array}
[c]{l@{}c}%
\frac{e}{e-1}, & \quad\omega\in\lbrack0,\frac{e-2}{e-1}],\\
\frac{1}{e-1}, & \quad\omega\in(\frac{e-2}{e-1},1].
\end{array}
\]
We will show that $X^{\ast}$ is the optimal test and $Q^{\ast}$ satisfies
\begin{equation}
\sup\limits_{X\in\mathcal{X}_{\alpha}}E_{Q^{\ast}}[X]+\rho_{2}^{\ast}(Q^{\ast
})=\inf\limits_{Q\in\mathcal{Q}}\sup\limits_{X\in\mathcal{X}_{\alpha}%
}\big(E_{Q}[X]+\rho_{2}^{\ast}(Q)\big). \label{condition satisfied by Q}%
\end{equation}
In fact, through simple calculations, we obtain%
\[
E_{Q^{\ast}}[1-X^{\ast}]-\rho_{2}^{\ast}(Q^{\ast})=\rho_{2}(1-X^{\ast}).
\]
Otherwise, by the classical Neyman-Pearson lemma, we know that $X^{\ast}$ is
also the optimal test for discriminating between probability measures $P$ and
$Q^{\ast}$, i.e.,%
\[
E_{Q^{\ast}}[1-X^{\ast}]=\inf_{X\in\mathcal{X}_{\alpha}}E_{Q^{\ast}}[1-X].
\]
Since
\[
\inf_{X\in\mathcal{X}_{\alpha}}\rho_{2}(1-X)\geq\inf_{X\in\mathcal{X}_{\alpha
}}E_{Q^{\ast}}[1-X]-\rho_{2}^{\ast}(Q^{\ast})=E_{Q^{\ast}}[1-X^{\ast}%
]-\rho_{2}^{\ast}(Q^{\ast})
\]
and
\[
E_{Q^{\ast}}[1-X^{\ast}]-\rho_{2}^{\ast}(Q^{\ast})=\rho_{2}(1-X^{\ast}%
)\geq\inf_{X\in\mathcal{X}_{\alpha}}\rho_{2}(1-X),
\]
we have
\[
\inf_{X\in\mathcal{X}_{\alpha}}E_{Q^{\ast}}[1-X]-\rho_{2}^{\ast}(Q^{\ast
})=\inf_{X\in\mathcal{X}_{\alpha}}\rho_{2}(1-X),
\]
which leads to (\ref{condition satisfied by Q}).
\end{example}

In the next, we consider the case that $\rho_{1}$ and $\rho_{2}$ are two
convex expectations defined on $L^{1}(\mu)$. Then our problem becomes:
\begin{equation}
\text{minimize}\quad\rho_{2}(K_{2}-X), \label{L-1-problem}%
\end{equation}
over the set $\mathcal{X}_{\alpha}=\{X:K_{1}\leq X\leq K_{2},\rho_{1}%
(X)\leq\alpha,X\in L^{1}(\mu)\}$, where $K_{1},K_{2}\in L^{1}(\mu)$.

\begin{theorem}
\label{auxiliary-theorem} If $\rho_{1}$ and $\rho_{2}$ are two finite convex
expectations defined on $L^{1}(\mu)$ space, then the optimal test of
(\ref{L-1-problem}) exists and has the same form as in Theorem
\ref{main-result}.
\end{theorem}

\begin{proof}
Since $\rho_{1}$ and $\rho_{2}$ are finite, then they are Lebesgue-continuous.
Repeating the proof of Theorem {\ref{existence}}, we will get the optimal test
exists. On the other hand, since $\rho_{1}$ and $\rho_{2}$ can be represented
by some set $\mathcal{P}$ and $\mathcal{Q}$ with their densities sets
$\{G_{P}\in L^{\infty}(\mu):P\in\mathcal{P}\}$ and $\{H_{Q}\in L^{\infty}%
(\mu):Q\in\mathcal{Q}\}$ are weakly compact, the form in Theorem
\ref{main-result} can also be obtained by using the same method as in section
4. The detailed proof is omitted.
\end{proof}

\end{document}